\theoremstyle{plain}
\newtheorem{theorem}{Theorem}
\newtheorem{proposition}{Proposition}[section]
\theoremstyle{proof}
\theoremstyle{definition}
\theoremstyle{remark}
\theoremstyle{lamma}
\numberwithin{equation}{section}
\numberwithin{lemma}{section}
\numberwithin{theorem}{section}
\theoremstyle{thmrm}
\begin{document} 
\title[Lebesgue-Ramanujan-Nagell type  equations]{Complete solutions of  certain Lebesgue-Ramanujan-Nagell type  equations}
\author{Kalyan Chakraborty, Azizul Hoque and Richa Sharma}
\address{Kalyan Chakraborty @Harish-Chandra Research Institute,
Chhatnag Road, Jhunsi,  Allahabad 211 019, India.}
\email{kalyan@hri.res.in}
\address{Azizul Hoque @Harish-Chandra Research Institute,
Chhatnag Road, Jhunsi,  Allahabad 211 019, India.}
\email{ahoque.ms@gmail.com}
\address{Richa Sharma @Malaviya National Institute of Technology Jaipur,  Jaipur, 302017, India.}
\email{richasharma582@gmail.com}

\keywords{Diophantine equation, Lebesgue-Ramanujan-Nagell type equation, Integer solution, Lucas sequences, Primitive divisors.}
\subjclass[2010] {Primary: 11D61, Secondary: 11D41, 11R29}
\maketitle

\begin{abstract}
It is well-known that for $p=1, 2, 3, 7, 11, 19, 43, 67, 163$, the class number of $\mathbb{Q}(\sqrt{-p})$ is one. We use this fact to determine all the solutions of $x^2+p^m=4y^n$ in non-negative integers $x, y, m$ and $n$.   
\end{abstract}

\section{Introduction}
Finding the solutions of an exponential Diophantine equation is a classical problem and even now it is a very active area of research. One of the  most prominent equation of this type is the generalized Lebesgue-Ramanujan-Nagell equation: 
\begin{equation}\label{LRNE}
x^2+D^m=\lambda y^n,
\end{equation}
where $D$ and $\lambda$ are fixed positive integers. Here, one of the problems is to find all the solutions of \eqref{LRNE} in positive integers $x, y, m $ and $n$. Many special cases of \eqref{LRNE} have been considered over the years, but almost all the results for general values of $m$ and $n$ are of fairly recent origin. 

When $ y=C$ is also fixed in \eqref{LRNE}, the resulting equation, 
\begin{equation}\label{RNE}
x^2+D^m=\lambda C^n,
\end{equation}
is called  the generalized Ramanujan-Nagell equation.
The famous Ramanujan-Nagell equation,
\begin{equation}\label{RE}
x^2+7= 2^n,
\end{equation}
 is a particular case of \eqref{RNE}. S. Ramanujan \cite{RA13} in 1913, conjectured that the complete set of solutions
of \eqref{RE} is given by 
$$(x,n)=\{(1,3), (3,4), (5,5), (11,7),(181,15)\}.$$
It was T. Nagell, who  confirmed this conjecture in 1948, and his proof in english was published in 1960 (see, \cite{NA61}). 
We refer \cite{BP14, SS06}  for further results on the solutions  of \eqref{RNE}.

A  Lebesgue-Nagell type equation, 
\begin{equation}\label{LNE}
x^2+D=\lambda y^n, 
\end{equation} 
is a particular case of \eqref{LRNE}. 

There are many interesting results concerning the solutions of \eqref{LNE} for $\lambda=1$. For $\lambda=1$, this equation 
has been extensively studied by many authors (see \cite{BS04, BM20, JC93, JC03, HS16}), and thus there are many interesting results concerning its solutions. 
 The equations of the form \eqref{LRNE} are well connected with the investigation of the class number of imaginary quadratic field $\mathbb{Q}(\sqrt{-D})$. The authors used the solvability of some special cases of \eqref{LRNE}  to study the class number of certain imaginary quadratic  fields in \cite{CHKP, CH, CHS}. S. A. Arif and F. S. A. Muriefah \cite{AM02} investigated \eqref{LRNE} for the case $\lambda=1, ~n\geq 5,$ $m$ odd integers and $D=p$ an odd prime. Precisely, they proved that the resulting  equation has no solution $(x, y, m, n)$ under the conditions $p\not\equiv 7\pmod8, ~ 3\nmid n$ and $n$ is co-prime to the class number of $\mathbb{Q}(\sqrt{-p})$ except for $p=19, 341$. For these values of $p$, the equation has exactly two families of solutions under the same conditions. A. B\'{e}rczes and I. Pink \cite{BP08} considered \eqref{LRNE} under the conditions $\lambda=1, ~ 2\mid m$, $D=p\leq 100, ~n$ is an odd primes and $\gcd(x,y)=1$. In \cite{SS06, HM11}, the authors extended the results of \cite{AM02, BP08} for some composite values of $D$. On the other hand, A. B\'{e}rczes and I. Pink \cite{BP} further extended the result of \cite{SS06} to the case where the class number of $\mathbb{Q}(\sqrt{-d})$ is supposed to be $2$ or $3$.

 For $\lambda=2$, Sz. Tengely \cite{TE} considered \eqref{LRNE} when $D=p, ~m=2q$, where both $p>2$ and $q>3$ are primes. In this case, he proved that it has finitely many solutions $(x, y, p, q)$ provided $y$ can not be written as the sum of two consecutive squares. A more general version of this case was studied by F. S. A. Muriefah, F. Luca, S. Siksek and Sz. Tengely in \cite{MFST} when $D \equiv 1 \pmod 4$. On the other hand for the case $(m,\lambda)=(1,4)$, F. Luca, Sz. Tengely and A. Togb\'{e} \cite{LTT} studied the solutions $(x, y, n)$ of \eqref{LRNE} when $D\equiv 3\pmod 4$ with $D\leq 100$.

In this paper, we  find all the solutions of  \eqref{LRNE} in non-negative integers $x, y,m$ and $n$ for  $\lambda =4$ and $D=\{1, 2,3, 7, 11, 19, 43, 67, 163\}$. More precisely, we find all the solutions of the equation,
\begin{equation}\label{me}
x^2+p^m=4y^n
\end{equation}
in non-negative integers $x, y, m$ and $n$ for $p\in\{1, 2, 3, 7, 11,19, 43, 67,163\}$. 
We note that \eqref{me} has been solved completely for $p=19$ in \cite{BHS18}. It is easy to see that \eqref{me} has no solution for $p=1$.  For $p=2$, $x$ is even and thus by writing $x=2X$, we get (from \eqref{me}) 
$$X^2+2^{m-2}=y^n.$$
One can explicitly write down the solutions of this equation using \cite[pp. 69-70]{AM01} and \cite[Theorem]{CO92}. Thus we exclude the cases when $p=1, 2, 19$. 

\section{Statement of the result}
We first note that to find all the solutions $(x, y, m, n)$ of \eqref{me}, it is sufficient to consider $x, y\geq 2$ and $m, n\geq 1$, since one can easily check it for smaller non-negative values of $x, y, m$ and $n$. Also, \eqref{me} has no solution when $m $ is even. Therefore the problem of finding all the solutions $(x, y, m,n)$ of \eqref{me} reduces to finding all the solutions $(x, y, k, n)$ of the following:
\begin{equation}\label{me2}
x^2+p^{2k+1}=4y^n,
\end{equation}
where $p=3, 7, 11, 43, 67, 163$.

When $n=1$, it is easy to see that  \eqref{me2}  has infinitely many solutions, and all of them are given by the following parametric form: 
\begin{eqnarray*}
\begin{cases}
x=2t+1, \\ y=t^2+t+\frac{1+p^{2k+1}}{4}, \text{ where}~ \ t \in  \mathbb{Z}_{\geq 0}.
\end{cases}
\end{eqnarray*}
We are now in a position to state the main result concerning the solutions of \eqref{me2} in positive integers $x,y,k$ and $n$.

\begin{theorem}\label{thm} 
Let $k\geq 0$ and $n\geq 2$ be two integers. Then \eqref{me2} has no solutions $(x, y, k, n)$ except for $n \in \{2, 3, 5, 7, 13\}$ and $3\mid n$. 
For these values of $n$, the 
families of solutions are given by Table \ref{T1} (with $t,r,s\in\mathbb{Z}_{\geq 0})$. 
\begin{table}[ht]
 \centering
\begin{tabular}{ c c  c  c  c} 

 $x$ & $y$ & $p$&$k$ & $n$\\
\hline
\vspace*{4mm}
$p^t\times \frac{p^{2(k-t)+1}-1}{2}$&$p^t\times\frac{p^{2(k-t)+1}+1}{4}$& $p$ &$k$ &$2$\\ 
$37\times 3^{3r}$&$7\times 2^{2r}$&$3$ & $3r$& $3$\\ \vspace*{2mm}
$5\times 7^{3r}$& $2\times 7^{2r}$& $7$& $3r$ & $3$\\ \vspace*{2mm}
$11\times 7^{5r}$& $2\times 7^{2r}$& $7$& $5r$ & $5$\\ \vspace*{2mm}
$31\times 11^{5r}$& $2\times 11^{2r}$& $11$& $5r$ & $5$\\ \vspace*{2mm}
$13\times 7^{7r}$& $2\times 7^{2r}$& $7$& $7r+1$ & $7$\\ \vspace*{2mm}
$181\times 13^{13r}$& $2\times 13^{2r}$& $7$& $13r$ & $13$\\ \vspace*{2mm}
$3^{r+1}$& $3^{(2r+1)/3s}$ & $3$ & $r$ & $3s$\\
\hline
\end{tabular}
\vspace*{1mm}
\caption{\small Solutions of \eqref{me2}}\label{T1}
\end{table}
\end{theorem}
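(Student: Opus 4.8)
The plan is to reduce \eqref{me2} to finitely many equations with a prime exponent and then invoke the theory of primitive divisors of Lucas sequences. A parity check shows $x$ must be odd, so $4\mid x^{2}+p^{2k+1}$ forces $p\equiv 3\pmod 4$; this holds for each of our primes, hence the ring of integers of $K=\mathbb{Q}(\sqrt{-p})$ is $\mathcal{O}_{K}=\mathbb{Z}\bigl[\tfrac{1+\sqrt{-p}}{2}\bigr]$, a PID by the class-number hypothesis --- this is precisely where the restriction on $p$ enters. If $n$ has an odd prime factor $q$, replacing $y$ by $y^{n/q}$ reduces to $n=q$; if $n$ is a power of $2$ we reduce to $n=4$. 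The cases $n=2$ and $n=4$ are elementary: factoring $(2y-x)(2y+x)=p^{2k+1}$, respectively $(2y^{2}-x)(2y^{2}+x)=p^{2k+1}$, forces each factor to be a power of $p$, and solving the resulting system --- using $p\equiv 3\pmod 4$ for the needed $4$-divisibility --- produces exactly the first row of Table~\ref{T1}.

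Now fix an odd prime $q=n$. Since $x$ and $p^{k}$ are both odd, $\alpha:=\tfrac{x+p^{k}\sqrt{-p}}{2}\in\mathcal{O}_{K}$ and $\alpha\bar{\alpha}=y^{q}$. A descent removing the largest power of $p$ dividing $x$ reduces to the case $p\nmid x$; this descent is what introduces the factors $p^{t},p^{3r},p^{2r},\dots$ occurring in Table~\ref{T1}. When $p\nmid x$ the ideals $(\alpha)$ and $(\bar\alpha)$ are coprime, so unique factorisation gives $\alpha=u\beta^{q}$ for a unit $u\in\mathcal{O}_{K}^{\times}$ and some $\beta=\tfrac{a+b\sqrt{-p}}{2}\in\mathcal{O}_{K}$. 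For $p\neq 3$ one has $\mathcal{O}_{K}^{\times}=\{\pm1\}$, and since $q$ is odd the unit is absorbed into $\beta^{q}$; for $p=3$ the six units must be carried through, the non-trivial unit classes treated separately, and this is the source of the extra $p=3$ families. Comparing the coefficients of $\sqrt{-p}$ in $\alpha=\beta^{q}$ gives $p^{k}=\pm b\,L_{q}$, where $L_{q}=\tfrac{\beta^{q}-\bar\beta^{q}}{\beta-\bar\beta}$ is the $q$-th term of the Lucas sequence attached to the pair $(\beta,\bar\beta)$ --- which one checks is a genuine Lucas pair, with $\beta/\bar\beta$ not a root of unity (again with separate attention when $p=3$). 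Hence $L_{q}=\pm p^{\,j}$ with $b=\pm p^{\,k-j}$: every prime factor of $L_{q}$ equals $p$.

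The crux is that the discriminant of the pair is $(\beta-\bar\beta)^{2}=-pb^{2}$, so $p$ divides it and therefore $p$ cannot be a primitive divisor of $L_{q}$; as $p$ is the only prime dividing $L_{q}$, the number $L_{q}=\pm p^{j}$ has no primitive divisor at all. By the Bilu--Hanrot--Voutier theorem this is impossible for $q>30$, and for $q\leq 30$ the pair $(\beta,\bar\beta)$ must belong to the known list of exceptional Lucas pairs. It then remains to run through that list under the constraint $L_{q}=\pm p^{j}$ with $p\in\{3,7,11,43,67,163\}$: the surviving exponents are $q\in\{3,5,7,13\}$. The case $q=3$ is handled directly --- analysing $3a^{2}-p^{\,2(k-j)+1}=\pm 4p^{\,j}$ together with $x=\tfrac14 a\bigl(a^{2}-3pb^{2}\bigr)$, and for $p=3$ the extra unit classes --- to show that solutions arise only for $p=3$, giving the $n=3$ row and, on replacing $y$ by $y^{s}$, the $3\mid n$ row. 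For $q=5,7,13$ the finitely many exceptional pairs are matched against our data, yielding the remaining rows of Table~\ref{T1}. The main obstacle is this last step: the descent bookkeeping, the case-by-case analysis of the exceptional Lucas pairs, and especially the $p=3$ case, where the larger unit group forces one to solve several twisted cubic (and higher-degree) equations by hand.
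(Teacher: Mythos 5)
Your overall strategy coincides with the paper's: factor $x^{2}+p^{2k+1}=4y^{n}$ in the ring of integers of $\mathbb{Q}(\sqrt{-p})$ using the class-number-one hypothesis, absorb units (with special care at $p=3$), observe that the relevant Lucas number $u_{q}=p^{k}/b$ can have no primitive divisor because $p\mid(\beta-\bar\beta)^{2}$, invoke Bilu--Hanrot--Voutier, and then sift the exceptional pairs. The reduction of even $n$ to $n\in\{2,4\}$ or to an odd prime exponent is also essentially what the paper does (it instead shows via congruences mod $3$ and mod $5$ that the cofactor exponent is odd, but the two reductions are interchangeable).

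There is, however, a concrete gap in your treatment of $\gcd(p,x)>1$. Writing $x=p^{s}X$, $y=p^{t}Y$ with $p\nmid XY$, the $p$-adic valuations satisfy $nt=\min\{2s,\,2k+1\}$ (equality $2s=2k+1$ being impossible by parity), so the descent splits into \emph{two} genuinely different cases. Your ``remove the largest power of $p$ dividing $x$'' handles only $nt=2s<2k+1$, which returns an equation of the original shape $X^{2}+p^{2(k-s)+1}=4Y^{n}$ and produces the $p^{t},p^{3r},\dots$ factors you mention. The other case, $nt=2k+1<2s$, yields $pZ^{2}+1=4Y^{n}$, which is \emph{not} of the original form: here the element $\tfrac{1+Z\sqrt{-p}}{2}$ is a perfect $n$-th power whose real part is $\tfrac12$, so one must argue instead that $u_{2n}=u_{n}(\alpha^{n}+\bar\alpha^{n})=\pm u_{n}$ has no primitive divisor and consult the BHV table at index $2n$ (plus a direct cubic computation when $3\mid n$). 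This subcase is exactly the source of the last row of Table \ref{T1} ($x=3^{r+1}$, $y=3^{(2r+1)/3s}$), and your sketch does not reach it. A smaller slip: your claim that the $q=3$ analysis leaves ``only $p=3$'' contradicts the table, which contains the coprime solution $(x,y,p,n)=(5,2,7,3)$ coming from $5^{2}+7=4\cdot 2^{3}$; the paper obtains the $k=0$, $q=3$ solutions by citing the Luca--Tengely--Togb\'e classification of $x^{2}+C=4y^{3}$ rather than by the congruence analysis alone.
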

We use elementary arguments and some properties of Lucas numbers to prove Theorem \ref{thm}.  We give the definitions of Lucas numbers and its primitive divisors for the sake of completion. Let $\alpha$ and $\beta$ be two algebraic numbers satisfying:
\begin{itemize}
\item $\alpha +\beta$ and $\alpha \beta$ are non-zero co-prime rational integers,
\item $\frac{\alpha}{\beta}$ is not a root of unity.
\end{itemize}
 Then the sequence $(u_{n})_{n=0}^{\infty}$ defined by
 $$
 u_{n} = \frac{\alpha^{n}- \beta^{n}}{\alpha-\beta}, \quad n\geq 1,
 $$ 
is a Lucas sequence. 
We say that a prime number $p$ is a primitive divisor of a Lucas number $u_{n}$ if $p$ divides $u_{n}$, but does not divide $(\alpha-\beta)^{2}u_{2} \cdots u_{n-1}.$ 
  
\section{Proof of Theorem \ref{thm}}
It is easy to see from \eqref{me2} that both $x$ and $y$ are odd except when $p=7$. When $p=7$, we see that $x$ is odd and $y$ is even. We first prove the following crucial proposition.

\begin{proposition}\label{prop3.1}
Let $k$ and $n$ be as in Theorem \ref{thm}. Then all the solutions $(x,y,p,k, n)$ of \eqref{me2} with $\gcd(p,x)=1$ and odd $n$ are given by Table \ref{TP}.
\begin{table}[ht]
 \centering
\begin{tabular}{ c c  c  c  c} 

 $x$ & $y$ & $p$&$k$ & $n$\\
\hline

$37$&$7$&$3$ & $0$& $3$\\ 
$5$& $2$& $7$& $0$ & $3$\\ 
$11$& $2$& $7$& $0$ & $5$\\ 
$31$ & $3$ & $11$& $0$ & $5$\\ 
$13$& $2$& $7$& $1$ & $7$\\ 
$181$& $2$& $7$& $0$ & $13$\\
\hline
\end{tabular}
\vspace*{1mm}
\caption{\small Solutions of \eqref{me2} when $\gcd(p,x)=1$ and $n$ is odd}\label{TP}
\end{table}

\end{proposition}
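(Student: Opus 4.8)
The plan is to pass to the ring of integers $\mathcal{O}=\mathbb{Z}\bigl[\tfrac{1+\sqrt{-p}}{2}\bigr]$ of $K=\mathbb{Q}(\sqrt{-p})$; this is the full ring of integers because every admissible $p$ satisfies $p\equiv3\pmod 4$, and it is a principal ideal domain because the class number of $K$ equals $1$. Since $x$ and $p^{k}$ are odd, the conjugate elements $\tfrac{x\pm p^{k}\sqrt{-p}}{2}$ lie in $\mathcal{O}$, and \eqref{me2} reads
\[
\frac{x+p^{k}\sqrt{-p}}{2}\cdot\frac{x-p^{k}\sqrt{-p}}{2}=y^{n}.
\]
A prime ideal dividing both factors divides their sum $x$ and their difference $p^{k}\sqrt{-p}$, hence divides the ramified prime $\mathfrak{p}=(\sqrt{-p})$; but $\mathfrak{p}\mid x$ would force $p\mid x$, contradicting $\gcd(p,x)=1$. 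So the two factors are coprime, and unique factorisation yields
\[
\frac{x+p^{k}\sqrt{-p}}{2}=\varepsilon\left(\frac{a+b\sqrt{-p}}{2}\right)^{n}
\]
for some unit $\varepsilon\in\mathcal{O}^{\times}$ and integers $a\equiv b\pmod 2$. For $p\neq3$ the units are $\pm1$, which---as $n$ is odd---are absorbed into the $n$-th power, and the same holds for $p=3$ when $3\nmid n$; when $p=3$ and $3\mid n$ the $n$-th powers of units collapse to $\{\pm1\}$, so after twisting $\tfrac{a+b\sqrt{-3}}{2}$ by a sixth root of unity one is reduced to the three cases $\varepsilon\in\{1,\omega,\omega^{2}\}$, where $\omega=\tfrac{-1+\sqrt{-3}}{2}$.

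Assume first $\varepsilon=\pm1$. Put $\alpha=\tfrac{a+b\sqrt{-p}}{2}$ and $\beta=\bar\alpha$, so that $\alpha+\beta=a$ and $\alpha\beta=y$. Using $\gcd(p,x)=1$ one checks $\gcd(a,y)=1$ and $a\neq0$; together with $y\ge2$ this also forces $\alpha/\beta$ not to be a root of unity (for $p=3$ the extra possibility that $\alpha/\beta$ is a non-real root of unity of $\mathbb{Q}(\sqrt{-3})$ is incompatible with $y\ge2$ and $\gcd(a,y)=1$). Hence $(u_{m})_{m\ge1}$ with $u_{m}=\tfrac{\alpha^{m}-\beta^{m}}{\alpha-\beta}$ is a Lucas sequence. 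Comparing the coefficients of $\sqrt{-p}$ in the displayed identity gives $b\,u_{n}=\pm p^{k}$, whence $b=\pm p^{j}$ with $0\le j\le k$ and $u_{n}=\pm p^{k-j}$. Thus the only rational prime that can divide $u_{n}$ is $p$, and $p$ already divides $(\alpha-\beta)^{2}=-p\,b^{2}$; therefore $u_{n}$ has \emph{no primitive divisor} (trivially so when $u_{n}=\pm1$).

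Now I invoke the theorem of Bilu, Hanrot and Voutier: $u_{n}$ has a primitive divisor for every $n>30$, and for $3\le n\le30$ all $n$-defective Lucas pairs are tabulated. Since $n\ge3$ is odd and $u_{n}$ is $n$-defective, necessarily $n\in\{3,5,7,13\}$. For $n\in\{5,7,13\}$ I run through the short list of defective pairs $(\alpha+\beta,\alpha\beta)=(a,y)$, keep only those with $a^{2}-4y=-p\,b^{2}$ for some $p\in\{3,7,11,43,67,163\}$, read off $j$ and then $k$ from $u_{n}=\pm p^{k-j}$, and recover $x=|\alpha^{n}+\beta^{n}|$; what survives is exactly the data with $(p,n)\in\{(7,5),(11,5),(7,7),(7,13)\}$, i.e.\ the rows $(x,y,k)=(11,2,0),(31,3,0),(13,2,1),(181,2,0)$. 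For $n=3$, where $u_{3}=a^{2}-y$ is $3$-defective precisely when $|u_{3}|$ is a power of $3$, combining $u_{3}=\pm p^{k-j}$ with $(\alpha-\beta)^{2}=a^{2}-4y=-p^{2j+1}$ gives $3a^{2}=p^{2j+1}\pm4p^{k-j}$, which under our constraints has only the solution $a=\pm1$, $j=k=0$, $p=7$, yielding $(x,y)=(5,2)$. Finally, for $p=3$, $3\mid n$ and $\varepsilon\in\{\omega,\omega^{2}\}$, expanding $\varepsilon\bigl(\tfrac{a+b\sqrt{-3}}{2}\bigr)^{n}$ and comparing coordinates leads---after elementary manipulation, which in particular forces $n=3$---to the single solution $(x,y,k,n)=(37,7,0,3)$. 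Collecting all cases gives Table~\ref{TP}.

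The principal difficulty is the last stage: matching the admissible discriminants $-p\,b^{2}$ against the Bilu--Hanrot--Voutier tables so that no $n$-defective Lucas sequence is overlooked, and carrying out the separate---though elementary---analysis for $p=3$, where the extra units of $\mathbb{Z}[\omega]$ produce the twisted cases and the indices divisible by $3$ must be treated by hand.
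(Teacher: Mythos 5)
Your overall architecture is the same as the paper's: factor $\bigl(\tfrac{x+p^k\sqrt{-p}}{2}\bigr)\bigl(\tfrac{x-p^k\sqrt{-p}}{2}\bigr)=y^n$ in the class-number-one ring $\mathcal{O}_{\mathbb{Q}(\sqrt{-p})}$, absorb units (except for $p=3$, $3\mid n$), observe that $u_n=\pm p^{k-j}$ has no primitive divisor, and invoke Bilu--Hanrot--Voutier to force $n\in\{3,5,7,13\}$, finishing $n=5,7,13$ from the tables. All of that matches the paper and is sound.

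The gap is in the $n=3$ case, which is precisely where the real work lies. You reduce to $3a^2=p^{2j+1}\pm 4p^{k-j}$ and assert that ``under our constraints'' the only solution is $a=\pm1$, $j=k=0$, $p=7$. The cases $j=0<k$ and $0<j<k$ die because $p\mid 3a^2$ would force $p\mid a$, but the surviving case $j=k\geq 1$, i.e.\ $3a^2=p^{2k+1}\pm4$, is not elementary: the paper first removes one sign by a congruence mod $3$, then kills $p=163,67,43,11$ by congruences mod $9$, $11$, $11$ and $8$ respectively, and for $p=7$ must appeal to a theorem of Bugeaud and Shorey on the number of solutions of generalized Ramanujan--Nagell equations to show $3a^2+4=7^{2k+1}$ has no solution with $k\geq1$ (simple congruences do not suffice there). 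None of this can be waved through. The same criticism applies to your final sentence: for $p=3$, $3\mid n$ and $\varepsilon\in\{\omega,\omega^2\}$ you claim ``elementary manipulation'' yields only $(37,7,0,3)$, but the paper needs a genuine argument here too --- for $k\geq1$ an imaginary-part computation giving $8\cdot 3^k=\pm a^3+3a^2b\mp 9ab^2-3b^3$ and hence the contradiction $3\mid a$, and for $k=0$ an appeal to Luca--Tengely--Togb\'e's classification of $x^2+C=4y^3$. You should either reproduce these arguments or cite the relevant results explicitly; as written, the hardest subcases of the proposition are asserted rather than proved.
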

\begin{proof}
It is sufficient to find the solutions of \eqref{me2} for all odd prime values  $q$ of $n$, that is, to find all the solutions $(x, y, k, q)$ of  
\begin{equation}\label{p3}
x^2+p^{2k+1}=4y^q,
\end{equation}
where $p=3, 7, 11, 43, 67$ and $163$.

We can factorize $y^q$ uniquely as the class number of $\mathbb{Q}(\sqrt{-p})$ is one:
\begin{equation*}
 \left( \frac{x+p^{k} \sqrt{-p}}{2}\right)  \left( \frac{x-p^{k} \sqrt{-p}}{2}\right) =y^q.
\end{equation*}
Thus for some rational integers $a$ and $b$ with same parity, we have
\begin{equation} \label{eq1f}
\frac{x+p^{k} \sqrt{-p}}{2} =u\left( \frac{a+b \sqrt{-p}}{2}\right)^q
\end{equation}
satisfying $y= \frac{a^{2}+pb^{2}}{4}$ with $u$ is a unit in the ring of integers of $\mathbb{Q}(\sqrt{-p})$. Now the only units in the ring of integers of $\mathbb{Q}(\sqrt{-p})$ are $\pm 1$ when $p\ne 3$. These can be absorbed into the $q$-th power. For $p=3$, the units are $\pm 1, \pm\omega, \pm \omega^2$, where $\omega=\exp(2\pi i/3)$, and all of them satisfy $u^6=1$. Thus these units can also be absorbed into the $q$-th power except when $q=3$. Therefore \eqref{eq1f} implies
\begin{equation}\label{p5}
\frac{x+p^{k} \sqrt{-p}}{2} =\left( \frac{a+b \sqrt{-p}}{2}\right)^q
\end{equation}
 satisfying $y= \frac{a^{2}+pb^{2}}{4}$ except in the case when $p=q=3$.
Since $x$ odd, it is easy to see that both $a$ and $b$ are  also odd. 

We fix $ \alpha =\frac{a+ b \sqrt{-p}}{2}$ and 
 \begin{align*} \label{4}
u_i=\frac{\alpha^{i}-\bar{\alpha}^{i}}{\alpha-\bar{\alpha}}~~( i=0, 1, 2, 3, \cdots),
\end{align*}
where $\bar{\alpha}$ denotes the conjugate of $\alpha$. Here both $\alpha$ and $\bar{\alpha}$ are algebraic integers as well as $\gcd( \alpha + \bar{\alpha}, \alpha \bar{\alpha})=1$. We observe that $\frac{\alpha}{\bar{\alpha}}$ is not a root of unity in the ring of integers of $\mathbb{Q} (\sqrt{-p}) $. Thus $(\alpha , \bar{\alpha})$ is a Lucas pair, and hence
$u_i=\frac{\alpha^{i}-\bar{\alpha}^{i}}{\alpha-\bar{\alpha}}$ is a Lucas number. Now it follows from \eqref{p5} that $\alpha^q-\bar{\alpha}^q=p^k\sqrt{-p}$. Thus $b$ divides $p^k$ and $u_q=p^k/b$ has no prime factor other than $p$. Hence, $u_q$ has no primitive divisor since $p$ divides $(\alpha-\bar{\alpha})^2$. Using a result of Bilu, Hanrot and Voutier \cite{BH01}, one concludes that \eqref{p3} has no solution for all primes $q>13$.

On the other hand, if $ q \in  \{3,5,7,11,13 \}$ then there are Lucas pairs $(\alpha,\bar{\alpha})$ for which $u_q$ does not have primitive divisors. We consider each of these primes separately.

For $q=13$, the only Lucas pair without primitive divisors belongs to $\mathbb{Q} (\sqrt{-7}) $ and the corresponding $\alpha=\frac{1+\sqrt{-7}}{2}$. Now comparing with our fixed $\alpha$, we get $(a,b)=(1,1)$ and hence, $y=2$. Thus \eqref{p3} implies $$x^2+7^{2k+1}=4\times 2^{13}$$ which further implies that $(x,k)=(181,0)$. Therefore $(x, y, k, q)=(181, 2,0,13)$ is a solution of \eqref{p3}. 

When $ q=11$, there are no Lucas pairs without primitive divisors, and hence in this situation we do not get any solution. 

Again for $q=7$, the only Lucas pair without primitive divisors belongs to $\mathbb{Q} (\sqrt{-7}) $ which corresponds to $\alpha=\frac{1+7\sqrt{-7}}{2}$ and thus we get $(a,b)=(1,1)$. This gives $y=2$ and hence we have 
$$
x^{2}+7^{2k+1}=4\times 2^{7}. 
$$
It is easy to check that $(x, k)=(13, 1)$ is the only solution of this equation. Therefore the corresponding solution of \eqref{p3} is $(x, y, k, q)=(13, 2, 1, 7)$.  

Further for $q=5$, the only two Lucas pairs without primitive divisors belong to $\mathbb{Q} (\sqrt{-7}) $ and  $\mathbb{Q} (\sqrt{-11}) $, and are given by $\alpha=\frac{1+\sqrt{-7}}{2}$ and $\alpha=\frac{1+\sqrt{-11}}{2}$ respectively. This further implies that $(a,b)=(1,1)$ and hence $y=2,3$ respectively.  Thus \eqref{p3} reduces to
$$
x^{2}+7^{2k+1}=4\times 2^5, \text{ or } x^2+11^{2k+1}=4\times 3^5.
$$
We see that $(x,k)\in\{(11,0), (31,0)\}$ are the only solutions of these equations. Thus the corresponding solutions of \eqref{p3} are $$(x, y, k,q)\in\{(11,2,0,5), (31, 3, 0, 5)\}.$$ 
We finally consider the case $q=3$.

(i) $k=0$.  In this case \eqref{p3} becomes, 
\begin{equation*}\label{ex}
x^2+p=4y^3.
\end{equation*}
By \cite[Theorem 1.1]{LTT}, we see that it  has only two solutions, viz. $(x, y, p)\in\{(37, 7, 3), (5, 2, 7)\}$ except for $p=163$. The corresponding solutions of \eqref{p3} are $(x, y, k,q)\in\{(37, 7,0,3), (5, 2, 0, 3)\}$.

We now deal with the case when $k=0$ and $p=163$;  equating the imaginary parts of \eqref{p5} we get,
$$
4=b(3a^2-163b^2).
$$
As $b$ is odd, the only possible values of $b$ are $\pm 1$, and thus
$\pm 4=3a^2-163$. 
This is not possible since $a$ is an odd integer. 

(ii)  $k\geq 1$. Again  equating the real and imaginary parts of \eqref{p5} except  when $p=3$, we get
\begin{align} \label{2a}
4x=a^{3}-3ab^{2}p
\end{align}
 and 
\begin{align} \label{2b}
4p^{k}=3a^{2}b-pb^{3}.
\end{align}
The possible values of $b$ are $\pm p^t,~ 0\leq t\leq k$ (as $b$ is odd).

For $t=0$; \eqref{2b} reduces to
$$
\pm 4p^k=3a^2-p.
$$
As $p\ne 3$ and $k\geq 1$, one derives from above relation that $p\mid a$, which is a contradiction.

For $1\leq t\leq k$; \eqref{2b} gives
$$
\pm 4 p^{k-t}=3a^{2}-p^{2t+1}.
$$
Since $\gcd(a, p)=1$ and $p\neq 3$, one derives that  $k=t$, and thus the above equation reduces to
\begin{align} \label{2c}
\pm 4=3a^{2}-p^{2k+1}.
\end{align}
As $p\equiv 1\pmod 3$, except for $p=11$,  reading \eqref{2c} modulo $3$ we can avoid positive sign from the l.h.s. of \eqref{2c}. Thus \eqref{2c} reduces to
\begin{align} \label{p17}
-4=3a^{2}-p^{2k+1}.
\end{align}
Similarly for $p=11$, by reading \eqref{2c} modulo $3$ we can avoid negative sign from the l.h.s. of \eqref{2c}, and thus it reduces to
\begin{align} \label{p18}
4=3a^{2}-p^{2k+1}.
\end{align}
For $p=163$, reading \eqref{p17} modulo $9$ we obtain $a^2\equiv 2\pmod 3$. This is not possible.  

Again for $p=67$, reading \eqref{p17} modulo $11$ we obtain $a^2\equiv 10\pmod {11}$ which is not possible.   

When $p=43$, reading \eqref{p17} modulo $11$ we obtain $a^2\equiv 2\pmod {11}$ which is again not possible.

Let us now  consider $p=11$. In this case reading \eqref{p18} modulo $8$ we obtain $a^2\equiv 5\pmod {8}$, which is impossible.

For $p=7$,  \eqref{p17} reduces to
\begin{equation}\label{p19}
3a^2+4=p^{2k+1}.
\end{equation}
Using \cite[Theorem 1]{BS01}, we can conclude that \eqref{p19} has at most one solution $(a,k)$ and thus $(a, k)=(1, 0)$ is the only solution of \eqref{p19}. Therefore \eqref{2a} gives $x=-5$ as $b=\pm 7^k$ with $k=0$. This is not possible since $x$ is positive.  

We finally consider the remaining case $p=q=3$ with $k\geq 1$. The units in ring of integers of $\mathbb{Q} (\sqrt{-3}) $ are $\pm1, \pm \omega, \pm \omega^{2}$, where $\omega=\exp(\frac{2 \pi i}{3})$.
 Thus \eqref{eq1f} reduces
\begin{eqnarray}\label{pex}
\frac{x+3^k\sqrt{-3}}{2} =\begin{cases}\left( \frac{1\pm \sqrt{-3}}{2}\right)\left( \frac{a+b \sqrt{-3}}{2}\right) ^{3}, \text{ or}\\ 
\left( \frac{a+b \sqrt{-3}}{2}\right) ^{3}
\end{cases}
\end{eqnarray}
with $a\equiv b \equiv 1 \pmod 2$ satisfying $4y= a^{2}+3b^{2}$.

We equate the imaginary parts of the first case of \eqref{pex} to get
$$
8\times 3^{k}=3a^{2}b-3b^3\pm a^{3}\mp 9ab^{2}.
$$
This shows that $3\mid a$ since $k\geq 1$, which is a contradiction.
Similarly, equating the imaginary parts of the case of \eqref{pex} we get $4\times 3^k=3a^2b-3b^3$. This implies that $b=3^r$ since $b$ is odd and thus one gets $4\times 3^{k-r-1}=a^2-3^{2r}$ for some integer $r\geq 0$. This is not possible since $4\times 3^{k-r-1}\equiv 0\pmod 8$. 
\end{proof}

\subsection*{Proof of Theorem \ref{thm}}
We present the proof in two parts: $ \gcd(p,x)=1$ and $ \gcd(p,x)\neq 1$.
\\
Case I: $\gcd(p,x)=1$.\\
In this case, Proposition \ref{prop3.1} gives all the solutions of \eqref{me2} provided $n>1$ is odd.
We now consider $n$ to be even and thus \eqref{me2} can be written as  
\begin{equation}\label{p1}
(2y^t-x)(2y^t+x)=p^{2k+1}
\end{equation}
with $n=2t$ for some integer $t\geq 1$.
Now suppose that $\gcd (2y^t-x,2y^t+x)\neq 1$. Then by \eqref{p1},  we obtain that $p\mid \gcd(2y^t-x, 2y^t+x)$ and that gives $p\mid x$, which is a contradiction. Thus $\gcd (2y^t-x,2y^t+x)= 1$, and hence  \eqref{p1} gives
\begin{eqnarray}\label{p2}
\begin{cases}
2y^{t}-x  = 1\\
2y^{t}+x =p^{2k+1}.
\end{cases}
\end{eqnarray}
Adding this two equations and then reducing modulo $3$ one gets,
$$
y^{t}\equiv 2 \pmod 3,
$$
except for $p=3$ and $11$. This shows that $t$ is odd and $y\equiv 2 \pmod 3.$ Similarly for $p=11$, once again adding the equations in \eqref{p2} and further reducing modulo
$5$ we obtain
$$
y^t\equiv 3 \pmod 5.
$$
Since $3$ is a quadratic non-residue modulo $5$, one concludes that $t$  is odd. Therefore \eqref{p1} reduces to 
$$
x^{2}+p^{2k+1}=4Y^t,
$$
where $Y=y^2$ and $t$ is an odd integer. This has no solution $(x,Y,p,k,t)$ by Proposition \ref{prop3.1}  
except for $t=1$. 

When $p=3$, equation \eqref{p1} becomes
$$
x^2+3^{2k+1}=4y^{2^{r}t}
$$
for some odd integer $t$ and for some integer $r$. This can further be simplified to
$$
x^2+3^{2k+1}=4Y^t,
$$
where $Y=y^{2^r}$. This has no solution $(x,Y,k,t)$ by Proposition \ref{prop3.1}  
except for $t=1$. Assume that  $r>1$. Then by \eqref{p2} we have a solution provided $(3^{2k+1}+1)/4=N$ for some integer $N\geq1$. This implies $k=0$ and hence, $x^2+3=4y^{2^r}$. By \cite[Theorem 1.1]{LTT}, we see that $(x,y)=(1,1)$ is the only solution which is already considered.

For $t=1$, \eqref{p2} gives 
$$(x,y)=\left(\frac{p^{2k+1}-1}{2}, \frac{p^{2k+1}+1}{4}\right).$$
This implies that
$$(x,y,k,n)=\left(\frac{p^{2k+1}-1}{2}, \frac{p^{2k+1}+1}{4}, k, 2\right)$$
is a family of solutions of \eqref{me2}. 
\\
Case 2: $\gcd(p,x)\neq 1$. \\ 
We put $x=p^sX$, and $y=p^tY$ for some positive integers $s$ and $t$ satisfying $\gcd(p,X)=\gcd(p,Y)=1$. Then \eqref{me2} turns out to
 \begin{equation}\label{p20}
  p^{2s}X^{2}+p^{2k+1}=4p^{tn}Y^n.
 \end{equation}
We encounter three possibilities here and the first one is 
$2k+1= \mbox{min} \{2s,2k+1,t n\}$.  We note that we can not have $n=2$ in this case since $4p^{2t}Y^2-p^{2s}X^2=p^{2k+1}$ would imply $2k+1=\min\{2s, 2t\}$, a contradiction. Therefore \eqref{p20} reduces to
 $$
  p(p^{s-k-1}X)^{2}+1=4Y^{n}p^{tn-2k-1}, ~~ n>2.
 $$
Utilising previous technique we read it modulo $p$, and conclude that $t n = 2 k +1$. Thus the last equation becomes
 $$
 p(p^{s-k-1}X)^{2}+1=4Y^{n}, ~~ n>2.
 $$
This can be written as 
\begin{equation}\label{ez}
pZ^2+1=4Y^n, ~~Z=p^{s-k-1}X,~~n>2.
\end{equation}
Let us assume $3$ does not divide $n$. Then as in the proof of Proposition \ref{prop3.1}, we put $Y=(a^2+pb^2)/4$ and $\alpha=(a+b\sqrt{-p})/2$ to obtain 
$$\frac{1+Z\sqrt{-p}}{2}=\alpha^n.$$
Now $u_j=\frac{\alpha^j-\bar{\alpha}^j}{\alpha-\bar{\alpha}},~ j=0,1,2,\cdots$, is a Lucas sequence and $u_{2n}/u_n=\alpha^n+\bar{\alpha}^n=\pm 1$. Here, we are looking for those integers $n$ for which $u_{2n}$ has no primitive divisor. Thus, from \cite[Table 1]{BH01} and the assumption $3\nmid n$,
we must have $(a, b, p, n)=(1, 1, 7, 4), (1, 1, 11, 5)$ or $(5, 1, 3, 5)$. After examining each case, we see that $(a,b,p,n)=(1,1,7,4)$ is the only possibility. For this possible value, \eqref{ez} gives $(X,Y,p,n)=(3,2,7,4)$, which does not contribute any solution to \eqref{me2}, since $nt=2k+1$.
We now consider that $n$ is a multiple of $3$. Then \eqref{ez} can be written as $pZ^2+1=4W^3$ with $W=Y^{n/3}$. If $p\ne 3$, then one gets $$\frac{1+Z\sqrt{-p}}{2}=\left(\frac{a+b\sqrt{-p}}{2}\right)^3.$$ 
Equating the real parts, we get $4=a(a^2-3b^2p)$, which is impossible. On the other hand, if $p=3$ then one gets $(1+Z\sqrt{-p})/2=(a+b\sqrt{-p})/2)^3$ or $(1+Z\sqrt{-p})/2=((1\pm \sqrt{-3})/2)((a+b\sqrt{-p})/2)^3$. The former case is impossible as above. In the latter case, one gets $8=a^3-9ab^2\pm 9a^2b\mp 9b^3$. This implies $(a,b)=(-1,\pm 1)$ (using PARI (v.2.9.1)), which gives $Z=W=1$. This shows that $(x,y,p,k,n)=(3^{r+1}, 3^{(2r+1)/3s}, 3,r,3s)$, $r,s\in\mathbb{Z}_{\geq 0}$ are also solutions of \eqref{me2}.  

 The second possibility is  
 $nt= \mbox{min} \{2s,2k+1,n t\}$ and in this situation \eqref{p20} becomes
 \begin{equation}\label{p21}
 p^{2s - n t}X^{2}+p^{2k- n t+1}=4Y^{n}.
 \end{equation}
This has solutions only if either $2 k = n  t-1$ or $2 s=n t$. Now if $2k=nt-1$, then \eqref{p21} gives 
 $$
 p^{2 s - n t}X^{2}+1=4Y^{n}.
 $$
Therefore as in the previous case, it has no solution.
On the other hand if $2 s = n t$, then \eqref{p21} implies
 \begin{equation}\label{p22}
  X^2+p^{2(k-s)+1}=4Y^{n}.
 \end{equation}
This has no solution by Proposition \ref{prop3.1} and Case I except for $n=2, 3,5,7$ and $13$. We deal with these values of $n$ individually.

If $n=2$ then $s=t$, and hence by Case I, the solutions of \eqref{p22} are given by 
$$(X, Y)=\left(\frac{p^{2(k-t)+1}-1}{2},~ \frac{p^{2(k-t)+1}+1}{4}\right).$$
Thus we can conclude that $$(x,y,k,2)=\left(p^t\times \frac{p^{2(k-t)+1}-1}{2},~ p^t \times \frac{p^{2(k-t)+1}+1}{4}, k, 2\right),\hspace*{2mm} t\in \mathbb{Z}_{\geq 0}$$ is a solution of \eqref{me2}.

Again if $n=3$ then $2s=3t$, and thus we can write $s=3r$ for some non-negative integer $r$. Therefore by Proposition \ref{prop3.1} the solutions of \eqref{p22} are given by 
$$\left(X, Y,p, n\right)\in\{(37,7, 3, 3),(5,2,7,3)\}$$
 satisfying $k=3r, ~r\in\mathbb{Z}_{\geq 0}$. 
This shows that 
$$(x,y,p, k, n)\in\{(37\times 3^{3r}, 7\times 3^{2r}, 3, 3r, 3),~(5\times 7^{3r}, 2\times 7^{2r}, 7, 3r, 3)\}, \hspace*{2mm} r\in \mathbb{Z}_{\geq 0},$$ are also solutions of \eqref{me2}.

For $n=5$, we have $2s=5t$ and thus we can write $s=5r$ for some $r\in\mathbb{Z}_{\geq0}$. Hence by Proposition \ref{prop3.1} the solutions of \eqref{p22} are 
given by $$(X, Y, p, n)\in \{(11, 2, 7, 5), ~ (31, 3, 11, 5)\}$$ under the condition that $k=5r, ~ r\in \mathbb{Z}_{\geq 0}$. 
This shows that 
$$(x,y,p, k, n)\in\{(11\times 7^{5r}, 2\times 7^{2r}, 7, 5r, 5),~(31\times 11^{5r}, 3\times 11^{2r}, 11, 5r, 5)\},$$
where $r\in \mathbb{Z}_{\geq 0}$, are also solutions of \eqref{me2}. 

Again for $n=7$, we have $2s=7t$ and thus we can write $s=7r$ for some $r\in\mathbb{Z}_{\geq 0}$. Therefore by Proposition \ref{prop3.1} the only solution of \eqref{p22} is given by 
$$\left(X, Y,p, n\right)=(13,2, 7, 7)$$
when $k=7r+1$ for some $r\in\mathbb{Z}_{\geq 0}$. 
This shows that 
$$(x,y,p, k, n)=(13\times 7^{7r}, 2\times 7^{2r}, 7, 7r+1, 7), \hspace*{2mm} r\in \mathbb{Z}_{\geq 0}$$ is also a solution of \eqref{me2}.

 Finally if $n=13$ then $2s=13t$, and thus we can write $s=13r$ for some non-negative integer $r$. Therefore by Proposition \ref{prop3.1} the only solution of \eqref{p22} is given by 
$$\left(X, Y,p, n\right)=(181,2, 7, 13)$$
satisfying the condition $k=3r, ~r\in\mathbb{Z}_{\geq 0}$. This further implies that  
$$(x,y,p, k, n)=(181\times 13^{13r}, 2\times 13^{2r}, 7, 13r, 13), \hspace*{2mm} r\in \mathbb{Z}_{\geq 0}$$ is also a solution of \eqref{me2}.

The only remaining case is $2s = \mbox{min} \{2s,2k+1,tn\}$. In this case, \eqref{p20} becomes 
 $$
 X^2+p^{2(k-s)+1}=4p^{tn-2s}Y^n.
 $$
Reading this modulo $p$,  we see that $t n = 2 s$, and thus it becomes
 $$
 X^2+p^{2(k-s)+1}=4Y^n.
 $$
This is same as \eqref{p22}. This completes the proof of Theorem \ref{thm}.  

\section*{acknowledgement}
The authors are thankful to Prof. Michel Waldschmidt for fruitful discussion while working on this manuscript. The second author is grateful to Prof. Kotyada Srinivas for stimulating environment at The Institute of Mathematical Sciences during his visiting period. He acknowledges SERB MATRICS Project No. MTR/2017/00100 and SERB-NPDF (PDF/2017/001958), Govt. of India. The third author  is thankful to Harish-Chandra Research Institute (HRI) and  Malaviya National Institute of Technology, Jaipur for providing facilities to prepare this manuscript. The authors are grateful to the anonymous referees for drawing their attention to \cite{AM01} and for their valuable remarks. The authors are also grateful to Prof. F. S. Abu Muriefah for providing a reprint of \cite{AM01}.

\end{document}